\newtheorem{theorem}{Theorem}[section]
\newtheorem{proposition}[theorem]{Proposition}
\newtheorem{corollary}[theorem]{Corollary}
\newtheorem{definition}[theorem]{Definition}
\newtheorem{remark}[theorem]{Remark}
\title{Extending Chevalley's Theorem: A Topological Characterization of Constructibility and Its Generalization Beyond Noetherian Spaces}
\author{Jiawei Sheng\\
Department of Mathematics, Capital Normal University\\
Beijing 100048, China\\
\href{mailto:2200501006@cnu.edu.cn}{2200501006@cnu.edu.cn}}
\date{December 22, 2025}
\begin{document}

\maketitle

\begin{abstract}
We introduce the notion of a \textit{good map} between topological spaces: a continuous map $f:X\to Y$ is \textit{good} if for every non-empty irreducible locally closed subset $U\subseteq X$, there exists a non-empty open subset $W\subseteq Y$ such that $W\cap f(U)=W\cap\overline{f(U)}\neq\varnothing$.

In Noetherian spaces, this condition is equivalent to preserving constructible subsets (Theorem \ref{thm:good-constructible}), giving a purely topological characterization of Chevalley's theorem. Without the Noetherian assumption, the good property continues to make sense and serves as a reasonable generalization.

We establish basic properties of good maps and introduce a weaker variant, \textit{weak good maps}. In algebraic geometry, we prove that \textbf{every morphism locally of finite type is good} (Theorem \ref{thm:main-good}). From this we obtain a generalization of Chevalley's theorem for morphisms locally of finite type whose underlying topological spaces are Noetherian (Theorem \ref{thm:generalized-chevalley}), and an elementary proof of Jacobson ascent (Corollary \ref{cor:jacobson}).

The theory is developed for sober spaces and therefore applies not only to schemes but also to other geometric categories such as \textbf{adic spaces} and \textbf{perfectoid spaces}. The good property is stable under formal completion, suggesting extensions to formal and non-Archimedean geometry.

\textbf{Keywords}: Chevalley's theorem, constructible sets, good maps, weak good maps, Jacobson schemes, formal completion, non-Noetherian schemes, adic spaces, perfectoid spaces.

\textbf{MSC 2020}: 14A15, 54C10, 13E05, 14G22.
\end{abstract}

\tableofcontents

\section{Introduction}

Chevalley's theorem states that a morphism of finite type between Noetherian schemes preserves constructible subsets. This paper explores the topological nature of this preservation property.

We define a topological condition called \textit{goodness} for continuous maps (Definition \ref{def:good-map}). In Noetherian spaces, a map is good if and only if it preserves constructible subsets (Theorem \ref{thm:good-constructible}). This provides a topological characterization of Chevalley's theorem in the Noetherian setting.

Without the Noetherian hypothesis, the good property still makes sense. It can be viewed as a generalization of constructibility preservation to arbitrary topological spaces. We develop basic properties of good maps and introduce a more flexible variant, weak good maps.

The theory finds concrete application in algebraic geometry. A key fact is that affine space projections $\mathbb{A}^{n}_{S}\to S$ are good for any base scheme $S$ (Corollary \ref{cor:affine-good}). From this we derive our main results: first, that \textbf{every morphism locally of finite type is good} (Theorem \ref{thm:main-good}); second, that for such morphisms whose underlying topological spaces are Noetherian, Chevalley's theorem holds (Theorem \ref{thm:generalized-chevalley}). This extends the classical Chevalley theorem to schemes whose structure sheaves may be non-Noetherian while their underlying topology remains Noetherian—a situation common in arithmetic geometry, as illustrated by the example of schemes over $\operatorname{Spec}(\mathcal{O}_{\mathbb{C}_{p}})$ (see Section \ref{sec:example}).

Finally, because the theory is formulated for sober spaces, it applies not only to schemes but also to adic spaces and perfectoid spaces, opening the way to Chevalley-type results in non-Archimedean geometry (see Section \ref{sec:non-archimedean}).

\section{Good Maps}

\subsection{Definition}

\begin{definition}\label{def:good-map}
A continuous map $f:X\to Y$ is \textbf{good} if for every non-empty irreducible locally closed subset $U\subseteq X$, there exists a non-empty open subset $W\subseteq Y$ such that
\[
W\cap f(U)=W\cap\overline{f(U)}\neq\varnothing,
\]
where $\overline{f(U)}$ is the closure of $f(U)$ in $Y$.
\end{definition}

\begin{remark}
The condition means $f(U)\cap W$ is closed in $W$ and non-empty. In Hausdorff spaces, irreducible subsets are singletons, so all continuous maps into Hausdorff spaces are trivially good. The concept is meaningful only for non-Hausdorff topologies.
\end{remark}

\subsection{An equivalent characterization}

The following equivalent form of goodness is often more convenient in algebraic geometry.

\begin{proposition}\label{prop:equiv-good}
A continuous map $f:X\to Y$ is good if and only if for every non-empty irreducible closed subset $Z\subseteq X$ and every non-empty open subset $U\subseteq Z$ (in the subspace topology), the image $f(U)$ contains a non-empty open subset of $\overline{f(Z)}$ (in the subspace topology of $\overline{f(Z)}$).
\end{proposition}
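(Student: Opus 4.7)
\medskip

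The plan is to use a single, simple bridge between the two formulations: if $U$ is a non-empty open subset of an irreducible closed set $Z$, then $U$ is dense in $Z$, hence by continuity $\overline{f(U)}=\overline{f(Z)}$. With this identity, the two conditions will become almost verbatim translations of one another.

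For the forward direction, I will start with an irreducible closed $Z\subseteq X$ and a non-empty open $U\subseteq Z$. Since $U$ is open in a closed set, it is locally closed in $X$; being a non-empty open subset of the irreducible space $Z$, it is also irreducible. So goodness applies and produces a non-empty open $W\subseteq Y$ with $W\cap f(U)=W\cap\overline{f(U)}\neq\varnothing$. Using the density observation $\overline{f(U)}=\overline{f(Z)}$, I rewrite this as $W\cap f(U)=W\cap\overline{f(Z)}$. The right-hand side is, by definition of the subspace topology, a non-empty open subset of $\overline{f(Z)}$, and the equality shows it lies inside $f(U)$; this is exactly the claimed condition.

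For the converse, I take an arbitrary non-empty irreducible locally closed $U\subseteq X$ and set $Z:=\overline{U}$. Since closures of irreducibles are irreducible, $Z$ is irreducible closed; and because $U$ is locally closed, $U$ is open in its closure $Z$. Hence the hypothesis gives a non-empty open subset of $\overline{f(Z)}$ contained in $f(U)$, which I may write as $W\cap\overline{f(Z)}$ for some open $W\subseteq Y$. Applying $\overline{f(U)}=\overline{f(Z)}$ once more yields $W\cap\overline{f(U)}=W\cap\overline{f(Z)}\subseteq f(U)$, and the opposite inclusion $W\cap f(U)\subseteq W\cap\overline{f(U)}$ is automatic, so $W\cap f(U)=W\cap\overline{f(U)}\neq\varnothing$, verifying goodness.

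The only non-routine step is the density identity $\overline{f(U)}=\overline{f(Z)}$, but this is immediate: one inclusion comes from $U\subseteq Z$, and the other from $f(Z)=f(\overline{U})\subseteq\overline{f(U)}$ by continuity, giving $\overline{f(Z)}\subseteq\overline{f(U)}$. No real obstacle is expected; the proposition is essentially bookkeeping, and the decomposition $U\text{ locally closed}\leftrightarrow (Z=\overline{U},\ U\text{ open in }Z)$ is the heart of the argument.
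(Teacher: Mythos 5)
Your proof is correct and follows essentially the same route as the paper's: both directions hinge on the correspondence between a locally closed irreducible $U$ and the pair $(Z=\overline{U},\ U$ open dense in $Z)$, combined with $\overline{f(U)}=\overline{f(Z)}$. If anything, you are slightly more careful than the paper, which uses this density identity implicitly (its forward direction cites only the inclusion $\overline{f(U)}\subseteq\overline{f(Z)}$ where the reverse inclusion, coming from $f(\overline{U})\subseteq\overline{f(U)}$, is what is actually needed).
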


\begin{proof}
($\Rightarrow$) Assume $f$ is good. Let $Z\subseteq X$ be a non-empty irreducible closed subset and $U\subseteq Z$ a non-empty open subset. Then $U$ is an irreducible locally closed subset of $X$. By Definition \ref{def:good-map}, there exists a non-empty open subset $W\subseteq Y$ such that $W\cap f(U)=W\cap\overline{f(U)}\neq\varnothing$. Since $f(U)\subseteq f(Z)\subseteq\overline{f(Z)}$, we have $\overline{f(U)}\subseteq\overline{f(Z)}$. Hence $W\cap\overline{f(Z)}$ is a non-empty relatively open subset of $\overline{f(Z)}$ contained in $\overline{f(U)}$. But $W\cap f(U)=W\cap\overline{f(U)}$ implies $W\cap\overline{f(Z)}\subseteq f(U)$. Thus $f(U)$ contains the non-empty relatively open subset $W\cap\overline{f(Z)}$ of $\overline{f(Z)}$.

($\Leftarrow$) Let $U\subseteq X$ be a non-empty irreducible locally closed subset. Write $U=V\cap Z$ with $V$ open in $X$ and $Z$ closed in $X$; we may take $Z=\overline{U}$. Then $Z$ is irreducible and $U$ is open in $Z$. By hypothesis, $f(U)$ contains a non-empty relatively open subset $W_{Z}$ of $\overline{f(Z)}$. Write $W_{Z}=W\cap\overline{f(Z)}$ for some open $W\subseteq Y$. Then $W\cap f(U)=W\cap\overline{f(Z)}=W\cap\overline{f(U)}$ (since $f(U)\subseteq f(Z)\subseteq\overline{f(Z)}$ and $\overline{f(U)}\subseteq\overline{f(Z)}$). This shows $f$ is good.
\end{proof}

\subsection{Characterization in Noetherian Spaces}

Recall that in a Noetherian topological space, a subset is \textbf{constructible} if it is a finite union of locally closed subsets. We use the well-known criterion for constructibility.

\begin{proposition}[Constructibility criterion]\label{prop:constructible-criterion}
Let $X$ be a Noetherian topological space. A subset $E\subseteq X$ is constructible if and only if for any irreducible closed subset $F\subseteq X$ such that $E\cap F$ is dense in $F$, the set $E\cap F$ contains a non-empty open subset of $F$.
\end{proposition}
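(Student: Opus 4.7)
The plan is to prove the two directions separately, with the real content concentrated in sufficiency via a short Noetherian induction.

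For necessity, I would write $E=\bigcup_{i=1}^{n}(U_i\cap C_i)$ with $U_i$ open and $C_i$ closed, and examine an arbitrary irreducible closed $F\subseteq X$ with $E\cap F$ dense in $F$. Decomposing $F=\overline{E\cap F}=\bigcup_i\overline{(U_i\cap C_i)\cap F}$ and invoking irreducibility of $F$, some $\overline{(U_i\cap C_i)\cap F}=F$. Since $U_i\cap C_i\cap F$ lies in the closed subset $C_i\cap F$ of $F$, taking closures forces $F\subseteq C_i$, whence $U_i\cap F=U_i\cap C_i\cap F\subseteq E\cap F$. This $U_i\cap F$ is the desired non-empty open subset of $F$ contained in $E\cap F$.

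For sufficiency, I plan a Noetherian induction on closed subspaces of $X$: assume the implication for every proper closed subspace. If $X$ is reducible, decompose $X=X_1\cup\cdots\cup X_m$ into irreducible components, each a proper closed subspace; the criterion descends to each $E\cap X_i$ within $X_i$, so by induction each $E\cap X_i$ is constructible in $X_i$, and thus $E=\bigcup_i(E\cap X_i)$ is constructible in $X$. If $X$ is irreducible and $E$ is not dense, apply the induction hypothesis to $\overline{E}\subsetneq X$. If $X$ is irreducible and $E$ is dense, apply the hypothesis with $F=X$ to obtain a non-empty open $U\subseteq E$; then $X\setminus U$ is a proper closed subspace, and $E\setminus U\subseteq X\setminus U$ inherits the criterion because any irreducible closed $F\subseteq X\setminus U$ is disjoint from $U$, so $(E\setminus U)\cap F=E\cap F$. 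By induction $E\setminus U$ is constructible in $X\setminus U$, and therefore $E=U\cup(E\setminus U)$ is constructible in $X$.

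The main obstacle is bookkeeping: one must verify that the criterion really passes to the three subspaces invoked ($X_i$, $\overline{E}$, and $X\setminus U$). This rests on the observation that irreducible closed subsets of a closed subspace $X'\subseteq X$ are exactly the irreducible closed subsets of $X$ contained in $X'$, so no new test subsets appear, together with the equality $(E\cap X')\cap F=E\cap F$ whenever $F\subseteq X'$. Once this routine check is in place the induction is entirely formal, and no deeper input is needed.
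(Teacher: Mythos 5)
Your proof is correct. Note that the paper itself offers no argument for this proposition: it is stated as a ``well-known criterion'' and used as a black box, so there is nothing to compare against except the standard literature. What you have written is precisely that standard proof (as in EGA $0_{\mathrm{III}}$ 9.2 or Mumford's Red Book): the necessity direction via the decomposition $E=\bigcup_i (U_i\cap C_i)$ and the observation that $\overline{(U_i\cap C_i)\cap F}=F$ forces $F\subseteq C_i$, and the sufficiency direction by Noetherian induction with the three-case split (reducible; irreducible with $E$ not dense; irreducible with $E$ dense, peeling off a non-empty open $U\subseteq E$ and passing to $X\setminus U$). The descent checks you flag as the ``main obstacle'' are exactly the right ones, and your justification --- that irreducible closed subsets of a closed subspace $X'$ are exactly the irreducible closed subsets of $X$ contained in $X'$, together with $(E\cap X')\cap F=E\cap F$ for $F\subseteq X'$ --- is what makes the induction legitimate; one also uses silently that a constructible subset of a closed subspace is constructible in the ambient space, which is immediate for the same reason. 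The argument is complete.
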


\begin{theorem}\label{thm:good-constructible}
Let $f:X\to Y$ be a continuous map between Noetherian topological spaces. Then $f$ is good if and only if $f$ preserves constructible subsets.
\end{theorem}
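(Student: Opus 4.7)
The plan is to derive both directions of the equivalence from the constructibility criterion (Proposition \ref{prop:constructible-criterion}), with the forward direction nearly immediate and the reverse direction requiring Noetherian induction on $Y$.

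For the ``if'' direction (preservation implies good): given a non-empty irreducible locally closed $U\subseteq X$, the set $U$ is constructible and $f(U)$ is irreducible (continuous image of irreducible). By hypothesis $f(U)$ is constructible, and by construction $f(U)$ is dense in the irreducible closed set $F:=\overline{f(U)}$. Applying the constructibility criterion to $E=f(U)$ and $F$ yields a non-empty open subset of $F$ contained in $f(U)$. Writing this subset as $W\cap F$ for some open $W\subseteq Y$, one obtains $W\cap f(U)=W\cap\overline{f(U)}\neq\varnothing$, which is the defining condition of goodness.

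For the ``only if'' direction (good implies preservation): since a finite union of constructibles is constructible, and since in a Noetherian space any locally closed set decomposes into finitely many irreducible locally closed components, it suffices to prove $f(U)$ is constructible for every non-empty irreducible locally closed $U\subseteq X$. I plan to do this by Noetherian induction on the irreducible closed subset $Z:=\overline{f(U)}$ of $Y$. By goodness, choose a non-empty open $W\subseteq Y$ with $W\cap f(U)=W\cap Z$. Then
\[
f(U)=(W\cap Z)\cup f\bigl(U\cap f^{-1}(Y\setminus W)\bigr).
\]
The first summand is locally closed, hence constructible. The closed subset $U\cap f^{-1}(Y\setminus W)\subseteq U$ has finitely many irreducible components $U_{1},\dots,U_{m}$, each irreducible locally closed in $X$ with $\overline{f(U_{j})}\subseteq Z\setminus W\subsetneq Z$. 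The inductive hypothesis yields constructibility of each $f(U_{j})$, and the finite union is constructible.

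The main technical obstacle is setting up the Noetherian induction cleanly: one uses Noetherianness of $X$ to guarantee that the closed subset $U\cap f^{-1}(Y\setminus W)$ of $U$ has only finitely many irreducible components, and Noetherianness of $Y$ to justify terminating descent along irreducible closed subsets. Once these points are secured, the rest of the argument is essentially a packaging of the constructibility criterion, since the substantive content has already been absorbed into the definition of a good map.
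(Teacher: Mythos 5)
Your proof is correct, and your ``if'' direction (preservation implies goodness) is essentially identical to the paper's. The interesting divergence is in the other direction. The paper proves that a good map preserves constructible sets by verifying the constructibility criterion (Proposition \ref{prop:constructible-criterion}) directly for the image: given constructible $C$ and an irreducible closed $F\subseteq Y$ in which $f(C)\cap F$ is dense, it passes to $D=C\cap f^{-1}(F)$, extracts an irreducible locally closed piece whose image is dense in $F$, and applies goodness once. You instead reduce to irreducible locally closed $U$ and run a Noetherian induction on $\overline{f(U)}$, using the identity $f(U)=(W\cap Z)\cup f\bigl(U\cap f^{-1}(Y\setminus W)\bigr)$ and the fact that $Z\setminus W\subsetneq Z$ (which relies on the non-emptiness clause $W\cap Z\neq\varnothing$ in the definition of goodness) to push the remaining components strictly down. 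Both arguments are sound; the paper's is shorter because it delegates all the work to the criterion, while yours uses the criterion only once and in exchange produces $f(U)$ as an explicit finite union of locally closed sets, which is arguably more self-contained and makes the role of the descending chain condition on $Y$ transparent. Your induction bookkeeping is fine: the minimal-counterexample formulation over irreducible closed subsets of $Y$ is well-founded by Noetherianness of $Y$, and Noetherianness of $X$ correctly supplies the finiteness of the irreducible components of $U\cap f^{-1}(Y\setminus W)$, each of which is closed in a locally closed set and hence locally closed in $X$.
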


\begin{proof}
($\Rightarrow$) Assume $f$ is good and let $C\subseteq X$ be constructible; we show $f(C)$ is constructible. By Proposition \ref{prop:constructible-criterion}, it suffices to show: for any irreducible closed subset $F\subseteq Y$ such that $f(C)\cap F$ is dense in $F$, the intersection $f(C)\cap F$ contains a non-empty open subset of $F$.

Let $D=C\cap f^{-1}(F)$, a constructible subset of $X$. Write $D=\bigcup_{i=1}^{n}U_{i}$ as a finite union of locally closed subsets. Then
\[
f(C)\cap F=f(D)=\bigcup_{i=1}^{n}f(U_{i}).
\]
Since the union is dense in the irreducible closed set $F$, some $f(U_{i})$ is dense in $F$. Write $U_{i}$ as a finite union of its irreducible components $U_{ij}$. Again by irreducibility of $F$, some $f(U_{ij})$ is dense in $F$. Now $U_{ij}$ is a non-empty irreducible locally closed subset of $X$. Since $f$ is good, by Proposition \ref{prop:equiv-good} applied to $Z=\overline{U_{ij}}$ and $U=U_{ij}$, $f(U_{ij})$ contains a non-empty relatively open subset of $\overline{f(U_{ij})}=F$. Hence $f(C)\cap F$ contains a non-empty open subset of $F$, as required.

($\Leftarrow$) Assume $f$ preserves constructible subsets. Let $U\subseteq X$ be a non-empty irreducible locally closed subset. Then $U$ is constructible, so $f(U)$ is constructible in $Y$. Let $F=\overline{f(U)}$, which is irreducible because $U$ is irreducible and $f$ continuous. Since $f(U)$ is constructible and dense in $F$, Proposition \ref{prop:constructible-criterion} gives a non-empty open subset $W_{F}\subseteq F$ such that $W_{F}\subseteq f(U)$. Choose an open subset $W\subseteq Y$ with $W\cap F=W_{F}$. Then
\[
W\cap f(U)=W_{F}=W\cap\overline{f(U)}\neq\varnothing.
\]
Thus $f$ is good.
\end{proof}

\subsection{Basic Properties}

\begin{proposition}\label{prop:good-basic}
Let $f:X\to Y$ be good.
\begin{enumerate}
    \item For any locally closed $S\subseteq X$, the restriction $f|_{S}:S\to Y$ is good.
    \item If $f(X)\subseteq Y^{\prime}\subseteq Y$, then $f:X\to Y^{\prime}$ (with the subspace topology on $Y^{\prime}$) is good.
\end{enumerate}
\end{proposition}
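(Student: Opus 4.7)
The plan is to verify both parts directly from the definition, exploiting the fact that ``locally closed'' is transitive and that closures interact well with subspace topologies.

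For part (1), I would let $U\subseteq S$ be a non-empty irreducible locally closed subset (in the subspace topology of $S$). The key observation is that the locally-closed-in-locally-closed is again locally closed: if $S=V\cap C$ with $V$ open and $C$ closed in $X$, and $U=V'\cap C'$ (in $S$) with $V'$ open and $C'$ closed in $S$, then lifting $V'$ and $C'$ to an open and a closed subset of $X$ and intersecting with $V\cap C$ exhibits $U$ as locally closed in $X$. Irreducibility is intrinsic to $U$, so $U$ remains an irreducible locally closed subset of $X$. Goodness of $f$ then yields an open $W\subseteq Y$ with $W\cap f(U)=W\cap\overline{f(U)}\neq\varnothing$, where $\overline{f(U)}$ is taken in $Y$. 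Since $f|_S(U)=f(U)$, the same $W$ witnesses goodness of $f|_S$.

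For part (2), I would let $U\subseteq X$ be a non-empty irreducible locally closed subset and take the open $W\subseteq Y$ provided by goodness of $f:X\to Y$, so $W\cap f(U)=W\cap\overline{f(U)}^{Y}\neq\varnothing$. Set $W':=W\cap Y'$, which is open in $Y'$. The essential point is the identity $\overline{f(U)}^{Y'}=\overline{f(U)}^{Y}\cap Y'$ for closures in the subspace topology. Since $f(U)\subseteq Y'$, one has $W'\cap f(U)=W\cap f(U)$ and
\[
W'\cap\overline{f(U)}^{Y'}=W\cap Y'\cap\overline{f(U)}^{Y}=W\cap\overline{f(U)}^{Y}\cap Y',
\]
and intersecting the equality $W\cap f(U)=W\cap\overline{f(U)}^{Y}$ with $Y'$ (which is harmless on the left since $f(U)\subseteq Y'$) yields the desired $W'\cap f(U)=W'\cap\overline{f(U)}^{Y'}$, which is non-empty because it equals $W\cap f(U)$.

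Both parts are essentially bookkeeping; the only place where one must be slightly careful is the interaction between closure and subspace topology in (2), and the verification that ``locally closed'' is transitive in (1). Neither is a genuine obstacle, so I expect the proof to be short and mechanical — the main value lies in recording it to use freely in later arguments.
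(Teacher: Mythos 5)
Your proposal is correct and follows the same route as the paper: in (1) you observe that a locally closed subset of a locally closed subspace is locally closed in the ambient space and reuse the same witness $W$, and in (2) you restrict $W$ to $W\cap Y'$ and invoke the identity $\overline{f(U)}^{Y'}=\overline{f(U)}^{Y}\cap Y'$, exactly as in the paper. No gaps; the extra care you take in spelling out the transitivity of ``locally closed'' is fine but not a genuine difference in approach.
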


\begin{proof}
(1) Let $U\subseteq S$ be a non-empty irreducible locally closed subset of $S$. Then $U$ is also irreducible locally closed in $X$. Since $f$ is good, there exists $W\subseteq Y$ open with $W\cap f(U)=W\cap\overline{f(U)}^{Y}\neq\varnothing$. The same $W$ works for $f|_{S}$.

(2) Let $U\subseteq X$ be a non-empty irreducible locally closed subset. Since $f$ is good, there exists $W\subseteq Y$ open with $W\cap f(U)=W\cap\overline{f(U)}^{Y}\neq\varnothing$. Set $W^{\prime}=W\cap Y^{\prime}$, which is open in $Y^{\prime}$. Then $W^{\prime}\cap f(U)=W\cap f(U)=W\cap\overline{f(U)}^{Y}$. Because $f(U)\subseteq Y^{\prime}$, we have $\overline{f(U)}^{Y^{\prime}}=\overline{f(U)}^{Y}\cap Y^{\prime}$. Hence $W^{\prime}\cap\overline{f(U)}^{Y^{\prime}}=W\cap\overline{f(U)}^{Y}\cap Y^{\prime}=W\cap f(U)=W^{\prime}\cap f(U)$. Thus $f:X\to Y^{\prime}$ is good.
\end{proof}

Recall that a topological space is \textbf{sober} if every non-empty irreducible closed subset has a unique generic point. Schemes, adic spaces, and perfectoid spaces are sober.

\subsection{Local Nature of Good Maps}

We now establish that the property of being good behaves well with respect to finite open covers of both source and target. This local character is essential for reducing geometric problems to affine patches.

\begin{proposition}[Goodness is finite local on the source]\label{prop:good-local-source}
Let $f:X\to Y$ be a continuous map. Assume that $X$ is \textbf{sober} and let $\{X_{i}\}_{i=1}^{n}$ be a finite open cover of $X$. Then $f$ is good if and only if each restriction $f|_{X_{i}}:X_{i}\to Y$ is good.
\end{proposition}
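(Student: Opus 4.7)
The forward direction is immediate: each $X_i$ is open in $X$ hence locally closed, so Proposition \ref{prop:good-basic}(1) gives goodness of $f|_{X_i}$ with no additional work, and sobriety is not used here.

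For the reverse direction, I will take an arbitrary non-empty irreducible locally closed $U \subseteq X$ and construct the required open $W \subseteq Y$. The strategy is to transfer the problem to a single chart $X_i$ on which the restriction is already known to be good. Since $\{X_i\}$ finitely covers $X$ and $U \neq \varnothing$, some $U \cap X_i$ must be non-empty; fix such an index $i$. (If one wants to invoke sobriety for conceptual clarity, pick the generic point $\eta \in \overline{U}$—it lies in $U$ since $U$ is open in $\overline{U}$—and choose $i$ with $\eta \in X_i$; but this is not essential.) Three quick observations then set up the reduction: $U \cap X_i$ is (i) irreducible, being a non-empty open subset of the irreducible $U$; (ii) locally closed in $X_i$, as the intersection of something locally closed in $X$ with an open subspace; and (iii) \emph{dense} in $U$, since every non-empty open subset of an irreducible space is dense. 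Applying goodness of $f|_{X_i}$ to $U \cap X_i$ then produces an open $W \subseteq Y$ with
\[
W \cap f(U \cap X_i) = W \cap \overline{f(U \cap X_i)} \neq \varnothing.
\]

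I claim the same $W$ witnesses goodness of $f$ at $U$. The key ingredient is the identity $\overline{f(U \cap X_i)} = \overline{f(U)}$ inside $Y$: any open $V \subseteq Y$ meeting $f(U)$ pulls back to a non-empty open set in $U$, which meets the dense subset $U \cap X_i$, so $V$ meets $f(U \cap X_i)$ as well. Granting this, the chain
\[
W \cap \overline{f(U)} = W \cap \overline{f(U \cap X_i)} = W \cap f(U \cap X_i) \subseteq W \cap f(U) \subseteq W \cap \overline{f(U)}
\]
collapses to equality, finishing the proof. I expect the density identity $\overline{f(U \cap X_i)} = \overline{f(U)}$ to be the only step that is not pure bookkeeping—it is a brief continuity argument, but it is the one place where one must actually use that $U$ is irreducible rather than merely that $\{X_i\}$ covers. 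Everything else (irreducibility of non-empty opens, stability of local closedness under restriction, non-emptiness of some $U \cap X_i$) is routine, and the sobriety hypothesis on $X$ is not used in any essential way for this proposition.
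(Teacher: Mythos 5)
Your proof is correct, and it takes a genuinely different --- and in fact leaner --- route than the paper's. For the converse direction the paper works with \emph{all} indices $i$ for which $U_i := U\cap X_i\neq\varnothing$, obtains a witness $W_i$ for each one, and sets $W=\bigcap_i W_i$; it then needs sobriety of $X$ to produce the generic point $x$ of $\overline{U}$, whose image $f(x)$ lies in every $W_i$ and guarantees that the intersection is non-empty, and it needs finiteness of the cover so that $W$ remains open. You instead work with a \emph{single} chart meeting $U$, observing that $U\cap X_i$ is a non-empty open, hence dense, subset of the irreducible set $U$, so that $\overline{f(U\cap X_i)}=\overline{f(U)}$ by continuity; the sandwich $W\cap\overline{f(U)}=W\cap f(U\cap X_i)\subseteq W\cap f(U)\subseteq W\cap\overline{f(U)}$ then collapses to the required equality, with non-emptiness inherited from the chart. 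I checked the one non-routine step, the identity of closures, and your density argument for it is sound. What your argument buys is a strictly stronger statement: goodness is local on the source for \emph{arbitrary} (not necessarily finite) open covers, and the sobriety hypothesis on $X$ is superfluous for this proposition. The paper's intersection-of-witnesses device gains nothing here that your single-chart reduction does not already deliver.
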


\begin{proof}
If $f$ is good, then by Proposition \ref{prop:good-basic}(1) every restriction to an open subset is good, hence each $f|_{X_{i}}$ is good.

Conversely, suppose each $f|_{X_{i}}$ is good. Let $U\subseteq X$ be a non-empty irreducible locally closed subset, and let $x\in U$ be the generic point of $\overline{U}$ (it exists because $X$ is sober). Since $\{X_{i}\}$ is a cover, $x$ belongs to some $X_{i_{0}}$. For every index $i$ with $U_{i}:=U\cap X_{i}\neq\varnothing$, the set $U_{i}$ is a non-empty irreducible locally closed subset of $X_{i}$. By goodness of $f|_{X_{i}}$, there exists a non-empty open subset $W_{i}\subseteq Y$ such that
\[
W_{i}\cap f(U_{i})=W_{i}\cap\overline{f(U_{i})}^{Y}\neq\varnothing.
\]
Because $f(x)\in f(U_{i})$, we have $f(x)\in W_{i}$. Set
\[
W:=\bigcap_{\{i:U_{i}\neq\varnothing\}}W_{i}.
\]
This is a finite intersection of non-empty open sets containing $f(x)$, hence a non-empty open subset of $Y$.

For each relevant $i$,
\[
W\cap f(U_{i})=W\cap\overline{f(U_{i})}^{Y}.
\]
Taking unions over $i$ gives
\[
W\cap f(U)=\bigcup_{i}W\cap f(U_{i})=\bigcup_{i}W\cap\overline{f(U_{i})}^{Y}=W\cap\bigcup_{i}\overline{f(U_{i})}^{Y}.
\]
The union is finite, so $\bigcup_{i}\overline{f(U_{i})}^{Y}=\overline{\bigcup_{i}f(U_{i})}^{Y}=\overline{f(U)}^{Y}$. Therefore
\[
W\cap f(U)=W\cap\overline{f(U)}^{Y}\neq\varnothing,
\]
which shows that $f$ is good.
\end{proof}

\begin{proposition}[Goodness is finite local on the target]\label{prop:good-local-target}
Let $f:X\to Y$ be a continuous map. Assume that $Y$ is \textbf{sober} and let $\{Y_{j}\}_{j=1}^{m}$ be a finite open cover of $Y$. Then $f$ is good if and only if each induced map $f_{j}:f^{-1}(Y_{j})\to Y_{j}$ (with the subspace topology on $f^{-1}(Y_{j})$) is good.
\end{proposition}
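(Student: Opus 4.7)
The forward direction is routine: each preimage $f^{-1}(Y_j)$ is open in $X$, hence locally closed, so Proposition \ref{prop:good-basic}(1) gives that $f|_{f^{-1}(Y_j)}:f^{-1}(Y_j)\to Y$ is good, and then Proposition \ref{prop:good-basic}(2) lets us corestrict the target to $Y_j$. So the content is in the converse, and that is where I would spend the effort.

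For the converse, suppose each $f_j$ is good, and pick a non-empty irreducible locally closed $U\subseteq X$. The plan is to locate a single open $W\subseteq Y$ (sitting inside one of the $Y_j$) that witnesses the goodness condition for $U$. To find the right index $j$, I would pass to $Y$: let $Z:=\overline{f(U)}^Y$, which is irreducible because $f$ is continuous and $U$ is irreducible. Using that $Y$ is sober, $Z$ has a generic point $\eta$, and since $\{Y_j\}$ covers $Y$ we may fix an index $j_0$ with $\eta\in Y_{j_0}$. This index will be the ``correct'' chart: because every non-empty open subset of $Z$ contains $\eta$, the open set $Y_{j_0}$ meets $f(U)$, so $U':=U\cap f^{-1}(Y_{j_0})$ is non-empty.

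Next I would verify that $U'$ is a non-empty irreducible locally closed subset of $f^{-1}(Y_{j_0})$: it is open in $U$ (hence irreducible and locally closed in $X$, and therefore in the open set $f^{-1}(Y_{j_0})$). The key technical step, which I expect to be the one that needs a little care, is showing
\[
\overline{f(U')}^Y \;=\; \overline{f(U)}^Y.
\]
This follows because $U'$ is a non-empty open subset of the irreducible $U$, so $\overline{U'}^X=\overline{U}^X$; continuity then yields $f(U)\subseteq f(\overline{U'}^X)\subseteq \overline{f(U')}^Y$, which gives one inclusion, and the other is automatic from $U'\subseteq U$.

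Finally I would apply goodness of $f_{j_0}$ to $U'$: this produces a non-empty open $W\subseteq Y_{j_0}$ with $W\cap f(U')=W\cap \overline{f(U')}^{Y_{j_0}}\neq\varnothing$. Since $Y_{j_0}$ is open in $Y$, one has $\overline{f(U')}^{Y_{j_0}}=\overline{f(U')}^{Y}\cap Y_{j_0}$, and because $W\subseteq Y_{j_0}$ this simplifies to $W\cap \overline{f(U')}^{Y}$. Combining with the equality of closures established above, and with the trivial identity $W\cap f(U)=W\cap f(U')$ (which holds because $W\subseteq Y_{j_0}$ forces any $x\in U$ with $f(x)\in W$ to lie in $U'$), I get
\[
W\cap f(U)=W\cap f(U')=W\cap \overline{f(U')}^{Y_{j_0}}=W\cap \overline{f(U')}^{Y}=W\cap \overline{f(U)}^{Y}\neq\varnothing,
\]
which is exactly the goodness condition for $f$. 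The main obstacle, as anticipated, is bookkeeping the two closure operations (in $Y$ vs.\ in $Y_{j_0}$) and the replacement of $U$ by $U'$; sobriety of $Y$ is what lets us pick $j_0$ so that this replacement does not shrink the relevant closure.
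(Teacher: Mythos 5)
Your proposal is correct and follows essentially the same route as the paper's proof: use sobriety to pick the generic point of $\overline{f(U)}$, choose the chart $Y_{j_0}$ containing it, apply goodness of $f_{j_0}$ to $U\cap f^{-1}(Y_{j_0})$, and transfer the conclusion back via the identity $\overline{f(U')}^{Y}=\overline{f(U)}^{Y}$. If anything, you justify that closure identity more explicitly than the paper does.
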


\begin{proof}
If $f$ is good, then for any open subset $V\subseteq Y$ the restriction $f|_{f^{-1}(V)}:f^{-1}(V)\to Y$ is good by Proposition \ref{prop:good-basic}(1); composing with the open immersion $V\hookrightarrow Y$ (which preserves goodness by Proposition \ref{prop:good-basic}(2)) shows that $f_{j}:f^{-1}(Y_{j})\to Y_{j}$ is good.

Conversely, assume each $f_{j}$ is good. Let $U\subseteq X$ be a non-empty irreducible locally closed subset, and put $F:={\overline{f(U)}}$. Since $Y$ is sober, the irreducible closed set $F$ has a generic point $y$. The point $y$ belongs to some $Y_{j_{0}}$; then $U_{j_{0}}:=U\cap f^{-1}(Y_{j_{0}})$ is non-empty and irreducible. Because $f_{j_{0}}$ is good, there exists a non-empty open subset $W_{j_{0}}\subseteq Y_{j_{0}}$ such that
\[
W_{j_{0}}\cap f_{j_{0}}(U_{j_{0}})=W_{j_{0}}\cap{\overline{f_{j_{0}}}(U_{j_{0}})}^{Y_{j_{0}}}\neq\varnothing.
\]
Observe that ${\overline{f_{j_{0}}}(U_{j_{0}})}^{Y_{j_{0}}}={\overline{f(U)}}\cap Y_{j_{0}}=F\cap Y_{j_{0}}$. Hence
\[
W_{j_{0}}\cap F\subseteq f_{j_{0}}(U_{j_{0}})\subseteq f(U).
\]
Now $W:=W_{j_{0}}$ is open in $Y_{j_{0}}$ and therefore open in $Y$ (since $Y_{j_{0}}$ is open in $Y$). Moreover, $W\cap F\subseteq f(U)$ and the intersection is non-empty. Consequently,
\[
W\cap f(U)=W\cap F=W\cap{\overline{f(U)}}\neq\varnothing,
\]
which is precisely the condition for $f$ to be good.
\end{proof}

\begin{remark}
Because schemes, adic spaces and perfectoid spaces are sober topological spaces, Propositions \ref{prop:good-local-source} and \ref{prop:good-local-target} imply that for a morphism in any of these categories, the property of being good can be verified on a \textbf{finite affine open covering} of either the source or the target. This will be used repeatedly in the algebraic-geometric applications of Section \ref{sec:applications}.
\end{remark}

\section{Weak Good Maps}

\subsection{Definition}

\begin{definition}\label{def:weak-good}
A continuous map $f:X\to Y$ is \textbf{weak good} if for every non-empty locally closed subset $U\subseteq X$, there exists a non-empty locally closed subset $V\subseteq Y$ such that $V\subseteq f(U)$.
\end{definition}

\begin{remark}
Every good map is weak good: for irreducible $U$, take $V=W\cap{\overline{f(U)}}$ from Definition \ref{def:good-map}.
\end{remark}

\subsection{Properties}

\begin{proposition}\label{prop:weak-good-properties}
Weak good maps are closed under composition and are local on both source and target.
\end{proposition}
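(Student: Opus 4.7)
The plan is to verify the three claims by straightforward manipulation of the definition, exploiting the single recurring fact that if $W$ is open in $Z$ then a subset $V\subseteq W$ is locally closed in $W$ if and only if it is locally closed in $Z$ (this is what makes the notion of ``locally closed'' transitive through open subspaces, and it requires no sobriety or finiteness hypothesis).

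For composition, I would let $f:X\to Y$ and $g:Y\to Z$ be weak good and take a non-empty locally closed $U\subseteq X$. Applying weak goodness of $f$ produces a non-empty locally closed $V\subseteq Y$ with $V\subseteq f(U)$; applying weak goodness of $g$ to $V$ then yields a non-empty locally closed $W\subseteq Z$ with $W\subseteq g(V)\subseteq (g\circ f)(U)$.

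For locality on the source, given an open cover $\{X_i\}$ of $X$, the forward direction is immediate because any non-empty locally closed subset of an $X_i$ is also locally closed in $X$, so the same witness $V\subseteq Y$ works for $f|_{X_i}$. For the converse, given $U\subseteq X$ non-empty locally closed, pick any index $i$ with $U\cap X_i\neq\varnothing$; the intersection $U\cap X_i$ is locally closed in $X_i$ (write $U=A\cap B$ with $A$ open and $B$ closed in $X$, then $U\cap X_i=(A\cap X_i)\cap(B\cap X_i)$), and weak goodness of $f|_{X_i}$ delivers a non-empty locally closed $V\subseteq Y$ contained in $f(U\cap X_i)\subseteq f(U)$.

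For locality on the target, given an open cover $\{Y_j\}$ of $Y$, the forward direction takes $U\subseteq f^{-1}(Y_j)$ non-empty locally closed (hence locally closed in $X$), applies weak goodness of $f$ to obtain $V\subseteq f(U)\subseteq Y_j$ non-empty locally closed in $Y$, and then observes that $V=V\cap Y_j$ is locally closed in $Y_j$ by the transitivity remark. For the converse, given $U\subseteq X$ non-empty locally closed, some $U\cap f^{-1}(Y_j)$ is non-empty (because $f(U)\subseteq\bigcup_j Y_j$) and locally closed in $f^{-1}(Y_j)$; weak goodness of $f_j$ yields $V\subseteq Y_j$ non-empty locally closed with $V\subseteq f(U\cap f^{-1}(Y_j))\subseteq f(U)$, and $V$ remains locally closed in $Y$ because $Y_j$ is open in $Y$. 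There is no real obstacle here: the mild point to watch is the transitivity of ``locally closed'' through open subspaces, which is precisely what lets the argument proceed without requiring either sobriety or finiteness of the cover, in contrast with Propositions \ref{prop:good-local-source} and \ref{prop:good-local-target} for the good property.
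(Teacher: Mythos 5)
Your proof is correct and follows essentially the same route as the paper, which dispatches the proposition in two lines and leaves the details to the reader; you simply fill in those details (the transitivity of ``locally closed'' through open subspaces being the only point that needs care). Your observation that neither finiteness of the cover nor sobriety is needed here, in contrast with Propositions \ref{prop:good-local-source} and \ref{prop:good-local-target}, is accurate and consistent with the paper's statement.
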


\begin{proof}
Closure under composition is clear. For locality: if $\{X_{i}\}$ is an open cover of $X$, then $f$ is weak good iff each $f|_{X_{i}}$ is weak good (take $U\subseteq X_{i}$). Similarly for an open cover of $Y$.
\end{proof}

\section{Applications in Algebraic Geometry}\label{sec:applications}

\subsection{Every Morphism Locally of Finite Type is Good}

\begin{theorem}\label{thm:main-good}
Let $f:X\to Y$ be a morphism of schemes that is \textbf{locally of finite type}. Then $f$ is a good map.
\end{theorem}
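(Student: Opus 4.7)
The plan is to reduce the theorem to three ingredients that together do all the work: the locality propositions (Propositions \ref{prop:good-local-source} and \ref{prop:good-local-target}) for restricting to affine patches, Corollary \ref{cor:affine-good} on affine-space projections for the actual geometric content, and goodness of closed immersions together with stability of goodness under composition for bookkeeping. The theorem should then follow by a formal assembly.

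First I reduce to the affine case. Because schemes are sober and $f$ is locally of finite type, one may choose a cover of $Y$ by affine opens $Y_j = \operatorname{Spec} A_j$ and, for each $j$, a cover of $f^{-1}(Y_j)$ by affine opens $X_{ji} = \operatorname{Spec} B_{ji}$ with each $B_{ji}$ a finitely generated $A_j$-algebra. Proposition \ref{prop:good-local-target} reduces goodness of $f$ to goodness of each $f^{-1}(Y_j) \to Y_j$, and Proposition \ref{prop:good-local-source} then reduces this to goodness of each $X_{ji} \to Y_j$. So it suffices to treat $f:\operatorname{Spec} B \to \operatorname{Spec} A$ with $B$ finitely generated over $A$. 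A presentation $A[t_1,\ldots,t_n] \twoheadrightarrow B$ provides a factorization $f = p \circ i$, where $i:\operatorname{Spec} B \hookrightarrow \mathbb{A}^n_A$ is a closed immersion and $p:\mathbb{A}^n_A \to \operatorname{Spec} A$ is the projection; by Corollary \ref{cor:affine-good}, $p$ is good. The closed immersion $i$ is good by direct verification: an irreducible locally closed $U \subseteq \operatorname{Spec} B$ can be written $U = V \cap \overline{U}$ with $V$ open in $\operatorname{Spec} B$, and lifting $V$ to an open $W \subseteq \mathbb{A}^n_A$ one gets $W \cap i(U) = W \cap \overline{i(U)}^{\mathbb{A}^n_A} = U \neq \varnothing$, using that $\operatorname{Spec} B$ is closed in $\mathbb{A}^n_A$ so the closure of $i(U)$ is the same whether computed in $\operatorname{Spec} B$ or in $\mathbb{A}^n_A$.

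The one missing piece is that good maps are closed under composition, which I record here. Given good $f:X \to Y$ and $g:Y \to Z$ and a non-empty irreducible locally closed $U \subseteq X$, goodness of $f$ produces an open $W \subseteq Y$ with $V := W \cap f(U) = W \cap \overline{f(U)}$ non-empty. Then $V$ is open in the irreducible set $\overline{f(U)}$, hence is itself irreducible locally closed in $Y$ and dense in $\overline{f(U)}$, so $\overline{g(V)} = \overline{g(f(U))}$. Applying goodness of $g$ to $V$ yields an open $W' \subseteq Z$ with $W' \cap g(V) = W' \cap \overline{g(V)} = W' \cap \overline{g(f(U))} \neq \varnothing$; since $g(V) \subseteq g(f(U)) \subseteq \overline{g(f(U))}$, this sandwich forces $W' \cap g(f(U)) = W' \cap \overline{g(f(U))}$, proving $g \circ f$ is good. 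Combining the three ingredients gives the theorem.

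I expect the composition step to be the only delicate point, precisely because goodness is phrased for irreducible locally closed subsets while $f(U)$ need not itself be locally closed; the replacement $V = W \cap \overline{f(U)}$ is what resolves this, and the rest is formal. All of the genuine geometric content is concentrated in Corollary \ref{cor:affine-good}, whose proof is where any classical Chevalley-style generic freeness argument must enter; once that corollary is granted, the proof of Theorem \ref{thm:main-good} is a pure reduction.
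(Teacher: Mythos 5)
Your reduction framework (locality, factor through a closed immersion into $\mathbb{A}^n_A$, compose) is sound in outline, and two of your ingredients are genuinely correct and even go beyond what the paper records: your proof that good maps are closed under composition (the paper only proves this for weak good maps in Proposition \ref{prop:weak-good-properties}) is valid --- the sandwich $W'\cap g(V)\subseteq W'\cap g(f(U))\subseteq W'\cap\overline{g(f(U))}=W'\cap\overline{g(V)}=W'\cap g(V)$ does close up correctly --- and your verification that closed immersions are good is fine. However, the proposal has a genuine gap, and it is exactly where you locate ``all of the geometric content'': you invoke Corollary \ref{cor:affine-good}, but in the paper that corollary is \emph{deduced from} Theorem \ref{thm:main-good} (its entire proof is ``the projection is locally of finite type, hence good by Theorem \ref{thm:main-good}''). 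Using it here is circular unless you supply an independent proof that $\mathbb{A}^n_S\to S$ is good, which you do not. Worse, that missing step is not actually easier than the theorem itself: an irreducible closed subset of $\mathbb{A}^n_A$ is $\operatorname{Spec}$ of an arbitrary finitely generated $A$-algebra, so proving goodness of the projection already requires the full argument for a dominant finite-type map of affine integral schemes. The paper attacks that case directly: it passes to $Z\to\overline{f(Z)}$ with reduced structures, picks affine opens to get an injection of domains $A\to B$ with $B$ of finite type over $A$, applies relative Noether normalization to make $B_s$ finite over $A_s[T_1,\dots,T_r]$, and concludes via universal openness that the image contains a non-empty open of $\overline{f(Z)}$. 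Some such generic-freeness input must appear somewhere in your argument, and currently it appears nowhere.

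A secondary issue: Propositions \ref{prop:good-local-source} and \ref{prop:good-local-target} are stated for \emph{finite} open covers, while a general scheme $Y$ (or $f^{-1}(Y_j)$) need not admit a finite affine cover, so your first reduction step does not literally follow from the cited propositions. This is repairable --- for the target one only needs the single $Y_{j_0}$ containing the generic point of $\overline{f(U)}$, and for the source one can take the single $X_{i_0}$ containing the generic point of $U$ and observe $W\cap f(U)\subseteq W\cap\overline{f(U)}=W\cap f(U_{i_0})\subseteq W\cap f(U)$ --- but as written the step is unjustified. If you fix the circularity by proving the affine-space case directly (via generic flatness) and patch the cover argument, your route becomes a legitimate alternative to the paper's, with the added benefit of isolating composition-stability of goodness as a reusable lemma.
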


\begin{proof}
We use the equivalent characterization of good maps (Proposition \ref{prop:equiv-good}). Let $Z\subseteq X$ be a non-empty irreducible closed subset and let $U\subseteq Z$ be a non-empty open subset (in the subspace topology). Set $C=\overline{f(Z)}$ and endow it with the reduced induced scheme structure; then $C$ is integral. The morphism $g:=f|_{Z}:Z\to C$ is dominant and locally of finite type.

Choose an affine open subset $V=\operatorname{Spec} A\subseteq C$ and an affine open subset $W=\operatorname{Spec} B\subseteq g^{-1}(V)\cap U$ (such $W$ exists because $Z$ is irreducible). Then $A\to B$ is an injection of integral domains, and $B$ is a finitely generated $A$-algebra.

By a relative version of Noether normalization, there exists a non-zero element $s\in A$ such that $B_{s}$ is finite over $A_{s}[T_{1},\ldots,T_{r}]$ for some $r$. Consequently, the induced morphism $W_{s}\to V_{s}=D(s)$ is \textbf{universally open}. In particular, $g(W_{s})$ contains a non-empty open subset of $V_{s}$, hence of $C$.

Since $W_{s}\subseteq U$, we obtain that $g(U)=f(U)$ contains a non-empty open subset of $C=\overline{f(Z)}$. By Proposition \ref{prop:equiv-good}, $f$ is good.
\end{proof}

\begin{corollary}\label{cor:affine-good}
For any scheme $S$, the projection $\pi:\mathbb{A}_{S}^{n}\to S$ is good.
\end{corollary}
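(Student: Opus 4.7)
The plan is to recognise this corollary as a one-line consequence of the preceding theorem, the only substantive verification being that the structure morphism $\pi:\mathbb{A}^{n}_{S}\to S$ is locally of finite type. I would therefore begin by recalling the construction: affine $n$-space over $S$ is defined by base change from $\mathbb{A}^{n}_{\mathbb{Z}}=\operatorname{Spec}\mathbb{Z}[T_{1},\ldots,T_{n}]$, and locally on an affine open $\operatorname{Spec}(A)\subseteq S$, the preimage is $\operatorname{Spec}(A[T_{1},\ldots,T_{n}])$ with $\pi$ induced by the inclusion $A\hookrightarrow A[T_{1},\ldots,T_{n}]$. Since $A[T_{1},\ldots,T_{n}]$ is finitely generated as an $A$-algebra, this ring map is of finite type. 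Because this holds for every affine open of $S$, the morphism $\pi$ is locally of finite type.

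Once this is established, I would simply invoke Theorem \ref{thm:main-good}: every morphism of schemes locally of finite type is good, hence $\pi$ is good. No further argument is needed.

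There is essentially no obstacle here; the only point that merits care is the scope of ``locally of finite type'' when $S$ itself is not quasi-compact or not affine, but this is handled uniformly by the local characterisation above. As a stylistic variant, one could also give a direct proof that sidesteps Theorem \ref{thm:main-good}: the projection $\operatorname{Spec}(A[T_{1},\ldots,T_{n}])\to\operatorname{Spec}(A)$ is already universally open (Noether normalization is not required for a free polynomial algebra), so Proposition \ref{prop:equiv-good} may be verified by hand on each affine chart and then patched together via Propositions \ref{prop:good-local-source} and \ref{prop:good-local-target}, which apply because schemes are sober. Since the statement is already contained in Theorem \ref{thm:main-good}, I would favour the short proof by direct appeal.
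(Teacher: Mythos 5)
Your proof is correct and takes essentially the same route as the paper, which likewise notes that the projection is locally of finite type and invokes Theorem \ref{thm:main-good}; your verification on affine charts merely spells out the standard fact the paper takes for granted. The alternative direct argument you sketch is a reasonable aside, but the short appeal you favour is exactly the paper's proof.
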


\begin{proof}
The projection is locally of finite type, hence good by Theorem \ref{thm:main-good}.
\end{proof}

\subsection{Generalized Chevalley Theorem for Noetherian Underlying Spaces}

\begin{theorem}\label{thm:generalized-chevalley}
Let $f:X\to Y$ be a morphism of schemes such that:
\begin{enumerate}
    \item $X$ and $Y$ have \textbf{Noetherian underlying topological spaces},
    \item $f$ is \textbf{locally of finite type}.
\end{enumerate}
Then $f$ preserves constructible subsets. In particular, $f$ is good.
\end{theorem}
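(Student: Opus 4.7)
The plan is to deduce this result as an essentially immediate consequence of the two main theorems already established. The argument chains them together with no new technical content: hypothesis (2) feeds into Theorem \ref{thm:main-good}, and hypothesis (1) feeds into Theorem \ref{thm:good-constructible}.

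Concretely, I would first apply Theorem \ref{thm:main-good} to the morphism $f$. Since $f$ is locally of finite type, this theorem directly yields that $f$ is a good map, which already settles the ``In particular'' clause of the statement.

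Next, I would invoke Theorem \ref{thm:good-constructible}, which asserts that for continuous maps between Noetherian topological spaces, goodness is equivalent to preserving constructible subsets. Hypothesis (1) guarantees that $X$ and $Y$ are Noetherian as topological spaces, so the equivalence applies; combining with the goodness established in the previous step gives that $f$ preserves constructible subsets. Thus the proof reduces to a single line once the two prior theorems are in place.

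There is no genuine obstacle: the substantive work has already been carried out. The only subtle point worth flagging in the write-up is that the two hypotheses play distinct, complementary roles—local finite type is what delivers goodness (via the relative Noether-normalization argument in Theorem \ref{thm:main-good}), while the Noetherian topology is what converts goodness into the classical constructibility formulation. It is precisely this decoupling that makes the theorem applicable to arithmetic settings where the structure sheaf fails to be Noetherian while the underlying space remains so, as in the example of $\operatorname{Spec}(\mathcal{O}_{\mathbb{C}_p})$ discussed in Section \ref{sec:example}.
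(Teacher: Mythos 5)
Your proposal is correct and is exactly the paper's own argument: apply Theorem \ref{thm:main-good} to get that $f$ is good, then apply Theorem \ref{thm:good-constructible} using the Noetherian hypothesis to conclude that $f$ preserves constructible subsets. Nothing further is needed.
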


\begin{proof}
By Theorem \ref{thm:main-good}, $f$ is good. Since the underlying spaces are Noetherian, Theorem \ref{thm:good-constructible} implies that $f$ preserves constructible subsets.
\end{proof}

\begin{remark}
The classical Chevalley theorem requires $f$ to be \textbf{quasi-compact and locally of finite presentation}, but makes no assumption on the underlying topological spaces. Theorem \ref{thm:generalized-chevalley} imposes a topological condition (Noetherian underlying spaces) while relaxing the algebraic condition to \textbf{locally of finite type}. In Noetherian spaces, "locally of finite type" automatically implies quasi-compactness, and for Noetherian rings "finite type" equals "finite presentation". Thus, in the classical setting of Noetherian schemes, Theorem \ref{thm:generalized-chevalley} coincides with the classical result, but its proof is purely topological.

More importantly, Theorem \ref{thm:generalized-chevalley} applies to schemes whose \textbf{structure sheaves are non-Noetherian} while their \textbf{underlying topology remains Noetherian}—a situation not covered by the classical formulation.
\end{remark}

\subsection{A Key Example: Schemes over $\operatorname{Spec}(\mathcal{O}_{\mathbb{C}_{p}})$}\label{sec:example}

Let $R=\mathcal{O}_{\mathbb{C}_{p}}$, the ring of integers of the $p$-adic complex field. This ring is \textbf{non-Noetherian} (its maximal ideal is not finitely generated). Nevertheless, the topological space $\operatorname{Spec}(R)$ consists of only two points (the closed point and the generic point) and is therefore \textbf{Noetherian}.

Let $S=\operatorname{Spec}(R)$. For any scheme $X$ of finite type over $S$, the underlying topological space of $X$ is Noetherian (it is a finite union of the Noetherian spaces of the closed fibre and the generic fibre). However, $X$ itself is \textbf{not a Noetherian scheme} because its structure sheaf involves the non-Noetherian ring $R$.

For a morphism $f:X\to Y$ of such schemes that is locally of finite type, the classical Chevalley theorem does not directly apply (since locally of finite type over $R$ is \textbf{not} equivalent to locally of finite presentation). Yet Theorem \ref{thm:generalized-chevalley} asserts that $f$ still preserves constructible subsets, because the underlying topological spaces are Noetherian and $f$ is locally of finite type.

This example illustrates that our generalization is not merely formal; it covers natural geometric contexts arising in arithmetic geometry where the base ring is non-Noetherian but the topological spaces are well-behaved.

\subsection{Jacobson Ascent}

\begin{corollary}\label{cor:jacobson}
If $f:X\to Y$ is locally of finite type and $Y$ is Jacobson, then $X$ is Jacobson.
\end{corollary}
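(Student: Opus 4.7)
The plan is to verify, using the good property of $f$ from Theorem \ref{thm:main-good}, that every non-empty locally closed subset $U\subseteq X$ contains a closed point of $X$, which is the topological characterization of $X$ being Jacobson. A preliminary reduction lets me assume $U$ is irreducible locally closed: writing $U=V\cap C$ with $V$ open and $C$ closed, and choosing any $x\in U$, the inclusion $\overline{\{x\}}\subseteq C$ gives $\overline{\{x\}}\cap U=\overline{\{x\}}\cap V$, a non-empty open subset of the irreducible set $\overline{\{x\}}$; it is irreducible and locally closed in $X$, and any closed point of $X$ produced inside it still lies in $U$.

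Next, I would apply the good property: Theorem \ref{thm:main-good} says $f$ is good, so Proposition \ref{prop:equiv-good} (applied to $Z=\overline{U}$) yields a non-empty open subset $W'$ of $\overline{f(U)}$ with $W'\subseteq f(U)$. Because $\overline{f(U)}$ is closed in $Y$, the set $W'$ is locally closed in $Y$; the Jacobson hypothesis on $Y$ then produces a closed point $y_{0}\in W'$. Pick any $x_{0}\in U$ with $f(x_{0})=y_{0}$. Since $y_{0}$ is closed, continuity forces $f(\overline{\{x_{0}\}})\subseteq\overline{\{y_{0}\}}=\{y_{0}\}$, so $\overline{\{x_{0}\}}\subseteq f^{-1}(y_{0})$ and in particular $U\cap f^{-1}(y_{0})$ is a non-empty locally closed subset of $f^{-1}(y_{0})$.

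The fiber $f^{-1}(y_{0})$ is a closed subset of $X$ (preimage of the closed singleton $\{y_{0}\}$) and, as a scheme, is locally of finite type over the residue field $\kappa(y_{0})$. Invoking the classical base case --- schemes locally of finite type over a field are Jacobson, via Hilbert's Nullstellensatz --- the non-empty locally closed subset $U\cap f^{-1}(y_{0})$ contains a point $p$ that is closed in $f^{-1}(y_{0})$. Since $f^{-1}(y_{0})$ is closed in $X$, the point $p$ is closed in $X$ and lies in $U$, completing the argument.

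The main obstacle is precisely this last invocation of Jacobson-ness for schemes of finite type over a field: that input lies outside the purely topological theory of good maps and must be imported from classical commutative algebra. Everything above it is a clean topological descent --- reducing to an irreducible locally closed subset, using goodness to produce a closed point of $Y$ in $\overline{f(U)}$, and transferring closedness from a fiber over a closed base point back up to $X$ via continuity.
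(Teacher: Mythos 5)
Your proof is correct and follows essentially the same route as the paper's: use goodness to place a non-empty locally closed subset of $Y$ inside $f(U)$, extract a closed point $y_0$ by the Jacobson property of $Y$, find a closed point of the fibre $f^{-1}(y_0)$ inside $U$ via the classical Jacobson property of schemes of finite type over a field, and transfer closedness back to $X$ because the fibre is closed. Your explicit reduction to irreducible $U$ is a welcome extra detail — the paper instead routes through the notion of weak goodness, whose derivation from goodness quietly needs the same reduction.
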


\begin{proof}
Let $U\subseteq X$ be a non-empty locally closed subset. By Theorem \ref{thm:main-good} $f$ is good, hence weak good. Therefore there exists a non-empty locally closed subset $V\subseteq Y$ with $V\subseteq f(U)$. Since $Y$ is Jacobson, $V$ contains a closed point $y$.

The fibre $f^{-1}(y)\cap U$ is non-empty because $y\in f(U)$. Moreover, $f^{-1}(y)$ is locally closed in $X$ (it is the inverse image of the closed point $y$), hence $f^{-1}(y)\cap U$ is locally closed in $X$. As $f$ is locally of finite type, the fibre $f^{-1}(y)$ is a scheme of finite type over the residue field $\kappa(y)$. Schemes of finite type over a field are Jacobson; therefore the non-empty locally closed subset $f^{-1}(y)\cap U$ contains a point $x$ that is closed in $f^{-1}(y)$.

Because $y$ is closed in $Y$ and $f$ is locally of finite type, the set $f^{-1}(y)$ is closed in $X$; moreover, a point that is closed in the closed subset $f^{-1}(y)$ is also closed in $X$. Thus $x$ is closed in $X$, and $U$ contains the closed point $x$, proving that $X$ is Jacobson.
\end{proof}

\section{Outlook}

\subsection{Formal Completions}

\begin{proposition}\label{prop:formal-good}
Let $f:X\to Y$ be a (weak) good morphism of schemes, $X^{\prime}\subseteq X$ and $Y^{\prime}\subseteq Y$ closed subschemes with $f(X^{\prime})\subseteq Y^{\prime}$. Then the induced map on formal completions $\widehat{f}:\widehat{X}_{X^{\prime}}\rightarrow\widehat{Y}_{Y^{\prime}}$ is (weak) good.
\end{proposition}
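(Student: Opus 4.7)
The plan is to reduce the proposition to a purely topological statement about restrictions and corestrictions of (weak) good maps, and then to invoke Proposition \ref{prop:good-basic} together with its easy analogue for weak good maps.

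First, I would recall the standard description of the formal completion along a closed subscheme: the underlying topological space of $\widehat{X}_{X'}$ is canonically homeomorphic to that of $X'$ (viewed as a closed subspace of $X$), and likewise $\widehat{Y}_{Y'}$ is homeomorphic to $Y'$. Under these identifications the continuous map underlying $\widehat{f}$ is precisely the restriction-corestriction $f|_{X'}:X'\to Y'$, which is well-defined because $f(X')\subseteq Y'$. Since being (weak) good is a property only of the underlying continuous map, it suffices to prove that $f|_{X'}:X'\to Y'$ is (weak) good.

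For the good case this is immediate: $X'$ is closed in $X$, hence locally closed, so Proposition \ref{prop:good-basic}(1) shows that $f|_{X'}:X'\to Y$ is good; combining with $f(X')\subseteq Y'$, Proposition \ref{prop:good-basic}(2) then yields goodness of the corestriction $f|_{X'}:X'\to Y'$. For the weak good case, the two analogues of Proposition \ref{prop:good-basic} must be verified separately, but both are straightforward. Given a non-empty locally closed $U\subseteq X'$, the set $U$ is also locally closed in $X$, so weak goodness of $f$ provides a non-empty locally closed $V\subseteq Y$ with $V\subseteq f(U)\subseteq Y'$; the containment $V\subseteq Y'$ implies that $V$ is locally closed in $Y'$ as well (the intersection of a locally closed set of $Y$ with any superset lying in $Y'$ remains locally closed in the subspace topology). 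Thus $f|_{X'}:X'\to Y'$ is weak good.

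The main obstacle I anticipate is not conceptual but expository: making precise, in a brief remark, the identification of the underlying topological space of $\widehat{X}_{X'}$ with that of $X'$ and the fact that the map underlying $\widehat{f}$ coincides, set-theoretically, with the restriction of $f$. Once this identification is in place, the proposition follows by a direct application of results already established in the paper, without any further input from the theory of formal schemes.
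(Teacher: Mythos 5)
Your proposal is correct and follows essentially the same route as the paper: identify the underlying space of the formal completion with that of the closed subscheme, reduce to the restriction--corestriction $f|_{X'}:X'\to Y'$, and apply Proposition \ref{prop:good-basic}. Your explicit verification of the weak good analogues is in fact slightly more careful than the paper, which invokes Proposition \ref{prop:good-basic} for the weak good case even though that proposition is only stated for good maps.
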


\begin{proof}
Consider the restriction $f|_{X^{\prime}}:X^{\prime}\to Y^{\prime}$. Since $X^{\prime}$ is a closed (hence locally closed) subscheme of $X$, by Proposition \ref{prop:good-basic}(1) the map $f|_{X^{\prime}}$ is (weak) good. Moreover, because $f(X^{\prime})\subseteq Y^{\prime}$, we may view $f|_{X^{\prime}}$ as a map $X^{\prime}\to Y^{\prime}$ and apply Proposition \ref{prop:good-basic}(2) to see that it remains (weak) good when regarded as a map into the subspace $Y^{\prime}$.

The underlying topological space of the formal completion $\widehat{X}_{X^{\prime}}$ is homeomorphic to the underlying topological space of $X^{\prime}$; similarly $\widehat{Y}_{Y^{\prime}}$ is homeomorphic to $Y^{\prime}$. Under these homeomorphisms, the map $\widehat{f}$ corresponds precisely to $f|_{X^{\prime}}:X^{\prime}\to Y^{\prime}$. Because (weak) goodness is a purely topological condition, $\widehat{f}$ inherits the (weak) good property from $f|_{X^{\prime}}$.
\end{proof}

\subsection{A General Pattern}

The applications follow a pattern:
\begin{enumerate}
    \item Find a natural class of maps that are good (e.g., affine space projections).
    \item Use properties of good maps to show other maps are good or weak good.
    \item Derive geometric consequences.
\end{enumerate}
This suggests similar developments in other non-Hausdorff geometric categories, such as formal schemes or rigid analytic spaces, by finding appropriate analogues of affine space projections.

\subsection{Extension to Non-Archimedean Geometry}\label{sec:non-archimedean}

Because the definition of a good map is purely topological and the key local-on-source/target properties only require sobriety, the theory developed here can be transferred directly to \textbf{adic spaces} and \textbf{perfectoid spaces}, which are sober topological spaces. A natural next step is to investigate whether important classes of morphisms in rigid-analytic or perfectoid geometry satisfy the good property. For instance:
\begin{itemize}
    \item Are \textbf{finite-type morphisms} of adic spaces good?
    \item Are \textbf{pro-\'etale morphisms} or \textbf{diamonds} good?
    \item Does an analogue of Theorem \ref{thm:main-good} hold in the category of perfectoid spaces?
\end{itemize}
Positive answers would yield topological generalizations of Chevalley-type results in non-Archimedean geometry. Moreover, the stability of goodness under formal completion (Proposition \ref{prop:formal-good}) suggests a natural bridge between good maps and formal geometry in the sense of Scholze-Weinstein.

\end{document}